\newtheorem{theorem}{Theorem}
\newtheorem{lemma}{Lemma}
\newenvironment{proof}
      {\medskip\noindent{\bf Proof:}\hspace{1mm}}
      {\hfill$\Box$\medskip}
\def\qed{\ifvmode\mbox{ }\else\unskip\fi\hskip 1em plus 10fill$\Box$}
\def\Ddots{\mathinner{\mkern1mu\raise\p@
\vbox{\kern7\p@\hbox{.}}\mkern2mu
\raise4\p@\hbox{.}\mkern2mu\raise7\p@\hbox{.}\mkern1mu}}
\title{\vspace{-0.7cm}An improved bound for the stepping-up lemma}
\author{David Conlon\thanks{St John's College, Cambridge, United Kingdom.
E-mail: {\tt D.Conlon@dpmms.cam.ac.uk}. Research supported by a
research fellowship at St John's College.} \and Jacob
Fox\thanks{Department of Mathematics, Princeton, Princeton, NJ.
Email: {\tt jacobfox@math.princeton.edu}. Research supported by an
NSF Graduate Research Fellowship and a Princeton Centennial
Fellowship.} \and Benny Sudakov\thanks{Department of Mathematics,
UCLA,  Los Angeles, CA 90095. Email: {\tt bsudakov@math.ucla.edu}. Research
supported in part by NSF CAREER award DMS-0812005 and by
USA-Israeli BSF grant.}}
\date{}
\begin{document}
\maketitle

\begin{abstract}
The partition relation $N \rightarrow (n)_{\ell}^k$ means that whenever the
$k$-tuples of an $N$-element set are $\ell$-colored,
there is a  monochromatic set of size $n$, where a set is called monochromatic
if all its $k$-tuples have the same color. The logical negation of
$N \rightarrow (n)_{\ell}^k$  is written as $N \not \rightarrow  (n)_{\ell}^k$.
An ingenious construction of Erd\H{o}s and Hajnal known as the stepping-up
lemma gives a negative partition relation for higher uniformity from one of
lower uniformity, effectively gaining an exponential in each application.
Namely, if $\ell \geq 2$, $k \geq 3$, and $N \not \rightarrow  (n)_{\ell}^k$,
then $2^N \not \rightarrow  (2n+k-4)_{\ell}^{k+1}$.
In this note we give an improved construction for $k \geq 4$. We introduce a
general class of colorings which extends the framework of Erd\H{o}s and Hajnal and can be
used to establish negative partition relations. We show that if $\ell
\geq 2$, $k \geq 4$ and $N \not \rightarrow  (n)_{\ell}^k$,
then $2^N \not \rightarrow  (n+3)_{\ell}^{k+1}$. If also $k$ is odd or $\ell
\geq 3$, then we get the better bound
$2^N \not \rightarrow  (n+2)_{\ell}^{k+1}$.  This improved bound gives a
coloring of the $k$-tuples
whose largest monochromatic set is a factor $\Omega(2^{k})$ smaller than given
by the original version of the stepping-up lemma.
We give several applications of our result to lower bounds on hypergraph Ramsey
numbers. In particular, for fixed $\ell \geq 4$ we determine up to an absolute constant factor (which is independent of $k$) the size of the largest guaranteed monochromatic set in an $\ell$-coloring of the $k$-tuples of an $N$-set.
\end{abstract}

\section{Introduction}

The partition relation $N \rightarrow (n)_{\ell}^k$ means that whenever the
$k$-tuples of an $N$-element set are $\ell$-colored, there is a  monochromatic
set of size $n$, where a set is called monochromatic if all its $k$-tuples have
the same color. If $\ell=2$ we simply write
$N \rightarrow (n)^k$ instead of $N \rightarrow (n)_2^k$.

The {\it Ramsey number} $r(n)$ is the least integer $N$ such that $N
\rightarrow (n)^2$. That is,  $r(n)$ is the least integer $N$ such that every
$2$-coloring of the edges of the complete graph on $N$ vertices contains a
monochromatic clique of size $n$.
Ramsey's theorem states that $r(n)$ exists for all $n$. Determining or
estimating Ramsey numbers is one of the
central problem in combinatorics (see the book Ramsey theory
\cite{GRS90} for details). A classical result of Erd\H{o}s and
Szekeres~\cite{ES35}, which is a quantitative version of Ramsey's
theorem, implies that $r(n) \leq 2^{2n}$ for every positive
integer $n$. Erd\H{o}s~\cite{E47} showed using probabilistic
arguments that $r(n) > 2^{n/2}$ for $n
> 2$. Over the last sixty years, there have been several
improvements on these bounds (see, e.g., \cite{C08}). However,
despite efforts by various researchers, the constant factors in
the above exponents remain the same.

Although already for graph Ramsey numbers there are significant
gaps between lower and upper bounds, our knowledge of hypergraph
Ramsey numbers is even weaker. The Ramsey number $r_k(n)$ is the
minimum $N$ such that $N \rightarrow (n)^k$. That is, $r_k(n)$ is the least $N$
such that every $2$-coloring of the $k$-tuples of an $N$-element set
contains a monochromatic set of size $n$, where a set is called monochromatic
if all its $k$-tuples have the same color.
Erd\H{o}s, Hajnal, and Rado \cite{EHR65} showed that there are positive
constants $c$ and $c'$ such that $$2^{cn^2}<r_3(n)<2^{2^{c'n}}.$$
They also conjectured that $r_3(n)>2^{2^{cn}}$ for some constant
$c>0$ and Erd\H{o}s (see, e.g. \cite{CG98}) offered a \$500 reward for a proof.
Similarly,
for $k \geq 4$, there is a difference of one exponential between
known upper and lower bounds for $r_k(n)$, i.e., $$t_{k-1}(c2^{-k}n^2)
\leq r_k(n) \leq t_k(c'n),$$ where the tower function $t_k(x)$ is
defined by $t_1(x)=x$ and $t_{i+1}(x)=2^{t_i(x)}$.

The proof of the lower bound is a corollary of the following lemma of
Erd\H{o}s and Hajnal, known as the stepping-up lemma (see e.g. \cite{GRS90}).

\begin{theorem}[Stepping-up Lemma] \label{stepuporiginal}
If $k \geq 3$ and $N \not \rightarrow  (n)_{\ell}^k$, then $2^N \not
\rightarrow  (2n+k-4)_{\ell}^{k+1}$.
\end{theorem}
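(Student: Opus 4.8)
The plan is to carry out the Erd\H{o}s--Hajnal stepping-up construction. Fix an $\ell$-colouring $\phi$ of the $k$-subsets of $[N]=\{0,1,\dots,N-1\}$ with no monochromatic set of size $n$ (we may take $n\ge k+1$, the remaining cases being vacuous or immediate). Identify the ground set of size $2^N$ with the binary strings of length $N$ ordered by value, and for distinct strings $a,b$ let $\delta(a,b)$ be the largest coordinate on which they differ. The basic facts are that for $a<b<c$ one has $\delta(a,c)=\max\{\delta(a,b),\delta(b,c)\}$ and $\delta(a,b)\neq\delta(b,c)$; iterating these, for a chain $a_0<\dots<a_t$ with $d_i:=\delta(a_i,a_{i+1})$ we get $\delta(a_p,a_q)=\max_{p\le r<q}d_r$ for $p<q$, attained at a unique index.

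Next, define the $(k+1)$-colouring $\chi$. For $a_0<\dots<a_k$, put $\delta_i=\delta(a_i,a_{i+1})$ for $0\le i\le k-1$. Call the set \emph{regular} if $\delta_0,\dots,\delta_{k-1}$ is monotone; then it is strictly monotone, so $D=\{\delta_0,\dots,\delta_{k-1}\}$ is a $k$-set, and set $\chi(a_0,\dots,a_k)=\phi(D)$. Otherwise the sequence has an interior local extremum; let $i$ be the smallest index at which one occurs and set $\chi=1$ if it is a local minimum, $\chi=2$ if a local maximum. (Reusing the two colours $1,2$ is where the hypothesis $\ell\ge2$ is used.)

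The heart of the matter is a structural claim: if $S=\{a_0<\dots<a_{m-1}\}$ is $\chi$-monochromatic of colour $c$ and $d_i=\delta(a_i,a_{i+1})$, then the sequence $d_0,\dots,d_{m-2}$ consists of at most two strictly monotone runs, and of a single run when $c\notin\{1,2\}$. To see this, note that an interior local extremum of $d$ at a position $u$ can be exhibited as the \emph{first} interior extremum of some $(k+1)$-subset of $S$: one keeps the three points $a_{u-1},a_u,a_{u+1}$ and adjoins $k-2$ further points of $S$, chosen so that the induced $\delta$-sequence is monotone up to the slot occupied by $a_u$. That subset is then irregular and receives colour $1$ or $2$ according to the type of the extremum, so if $c\notin\{1,2\}$ no interior extremum exists, and if $c=1$ (resp.\ $c=2$) no interior local maximum (resp.\ minimum) exists --- i.e.\ $d$ has at most one direction change. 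Making the choice of the adjoined points work --- in particular for an extremum lying near an end of $S$ --- is the main technical obstacle.

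It remains to deduce the bound on $m$. If $d$ is a single monotone run then every $(k+1)$-subset of $S$ is regular, and for $\{a_{i_0}<\dots<a_{i_k}\}$ the derived $k$-set is $\{d_{i_1-1},\dots,d_{i_k-1}\}$ (increasing case) or $\{d_{i_0},\dots,d_{i_{k-1}}\}$ (decreasing case); as the $i_j$ vary these are exactly the $k$-subsets of $\{d_0,\dots,d_{m-2}\}$, so the latter $(m-1)$-set is $\phi$-monochromatic and $m-1\le n-1$. If $d$ has one direction change, at an extremal position $t$, split $S$ at $a_t$ into $S_1=\{a_0,\dots,a_t\}$ and $S_2=\{a_t,\dots,a_{m-1}\}$, each with a monotone $\delta$-sequence; applying the same argument on each gives $\phi$-monochromatic sets of sizes $t$ and $m-1-t$, whence $m-1\le 2(n-1)$, i.e.\ $m\le 2n-1$. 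A short further argument --- the regular $(k+1)$-subsets of $S$ straddling $a_t$ show that the junction value $d_t$ may be adjoined to one of those two monochromatic sets --- sharpens this to $m\le 2n-2$. In every case $m\le 2n-2\le 2n+k-5<2n+k-4$, so $\chi$ has no monochromatic set of size $2n+k-4$, as required.
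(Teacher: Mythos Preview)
Your colouring is exactly the Erd\H{o}s--Hajnal colouring described in the paper (their $\alpha(S,i)=i$), so the construction is right. The analysis, however, rests on a structural claim that is false: you assert that the $\delta$-sequence of a $\chi$-monochromatic set has at most one direction change, and this fails already for $k=4$. Take six points $a_0<\dots<a_5$ with consecutive $\delta$-values $5,6,7,1,2$; this sequence has a local maximum at $d_2=7$ and a local minimum at $d_3=1$, hence three monotone runs. Yet every one of the six $5$-subsets has its first local extremum a maximum (removing any single $a_i$ leaves an induced $\delta$-sequence in which the value $7$ still appears and is still preceded only by smaller values), so the set is monochromatic in your colour~$2$.

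The break is exactly at the step you flag as ``the main technical obstacle,'' and it is not merely technical: when a local minimum lies within $k-2$ positions of the right end and is immediately preceded by a local maximum, there is no way to adjoin $k-2$ further points so that the minimum becomes the \emph{first} extremum of a $(k+1)$-subset; any such subset is forced to pick up enough of the left side that the preceding maximum wins. (A minor related slip: exhibiting an extremum of $d$ at position $u$ involves the four points $a_{u-1},a_u,a_{u+1},a_{u+2}$, not three.) Your claimed bound $m\le 2n-2$ is therefore unsupported, and indeed the additive $k-4$ in the classical statement is there precisely to absorb this end effect. The standard proof does not try to rule out a second direction change globally; instead it bounds the position of the first extremum, the length of the monotone run after it, and then handles a short residual tail of length $O(k)$ separately.
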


By starting with the negative partition relation $2^{n^2/6} \not \rightarrow
(n)^3$, which
follows by considering a random $2$-coloring of the triples of an $N$-set,
after $k-3$ iterations of the stepping-up lemma,
we have $t_{k-1}(n^2/6) \not \rightarrow (2^{k-3}n-k+3)^k$.

There is also a way to step-up from $k=2$ to $k=3$, but, in doing so, the number of colors jumps from
$2$ to $4$. However, the benefit from stepping up from $k=2$ is that already for four colors we know
the correct height for the tower function. In particular, Erd\H{o}s and Hajnal showed that there exist
constants $c$ and $c'$ such that
$$2^{2^{c n}} \leq r_3(n; 4) \leq 2^{2^{c' n}},$$
where $r_k(n; \ell)$ is the minimum $N$ such that every $\ell$-coloring of the $k$-tuples of an $N$-element set
contains a monochromatic subset of size $n$. The relevant variant of the stepping-up lemma is as follows.

\begin{theorem}[Stepping-up Lemma for $k=2$]
If $N \not \rightarrow  (n)^2$, then $2^N \not \rightarrow
(n+1)_4^{3}$.
\end{theorem}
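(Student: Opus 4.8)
The plan is to run the Erd\H{o}s--Hajnal stepping-up construction, which gains one exponential in the size of the ground set, here at the cost of doubling the number of colors. Fix a $2$-coloring $\chi$ of the pairs of $[N]=\{0,1,\dots,N-1\}$ witnessing $N\not\rightarrow(n)^2$, and identify the $2^N$-element ground set with $\{0,1\}^N$, ordered as integers via binary expansions. For distinct $a,b$ let $\delta(a,b)\in[N]$ be the largest coordinate where $a$ and $b$ differ. The one fact I would establish first, by a short bit-by-bit check, is the ``max formula'': if $a<b<c$ then $\delta(a,b)\ne\delta(b,c)$ and $\delta(a,c)=\max\{\delta(a,b),\delta(b,c)\}$; iterating, $\delta(a_i,a_j)=\max\{\delta(a_t,a_{t+1}):i\le t<j\}$ for any chain $a_0<\dots<a_m$.

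Next I would define the $4$-coloring $\Phi$ of triples. Given $a<b<c$, set $\delta_1=\delta(a,b)$ and $\delta_2=\delta(b,c)$ --- distinct elements of $[N]$ --- and put $\Phi(\{a,b,c\})=\bigl(\varepsilon,\,\chi(\{\delta_1,\delta_2\})\bigr)$, where $\varepsilon=0$ if $\delta_1<\delta_2$ and $\varepsilon=1$ if $\delta_1>\delta_2$. The first coordinate records the local shape of the $\delta$-sequence, the second feeds the two relevant bit-positions into the old coloring $\chi$, and in total this uses $2\cdot 2=4$ colors.

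For the analysis, suppose $S=\{a_0<a_1<\dots<a_n\}$ is $\Phi$-monochromatic of color $(\varepsilon,\gamma)$, and set $\delta_i=\delta(a_{i-1},a_i)$ for $1\le i\le n$. Evaluating the first coordinate on the consecutive triples $\{a_{i-1},a_i,a_{i+1}\}$ forces $\delta_1<\dots<\delta_n$ if $\varepsilon=0$ and $\delta_1>\dots>\delta_n$ if $\varepsilon=1$; either way $\delta_1,\dots,\delta_n$ are $n$ \emph{distinct} elements of $[N]$. Now comes the key point: in the increasing case the max formula collapses to $\delta(a_i,a_j)=\delta_j$ whenever $i<j$, so for all $1\le j<k\le n$ the triple $\{a_0,a_j,a_k\}$ has second coordinate $\chi(\{\delta_j,\delta_k\})$, which must equal $\gamma$. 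Hence $\{\delta_1,\dots,\delta_n\}$ is a $\chi$-monochromatic set of size $n$, contradicting $N\not\rightarrow(n)^2$; the decreasing case is symmetric, as $\{a_i,a_j,a_\ell\}$ then reads off $\{\delta_{i+1},\delta_{j+1}\}$, and these again sweep out every pair among $\delta_1,\dots,\delta_n$. So no such $S$ exists, i.e. $2^N\not\rightarrow(n+1)_4^3$.

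The step I expect to be the crux is the last one. The first coordinate of $\Phi$ by itself only yields a $\chi$-monochromatic \emph{path} $\delta_1\delta_2\cdots\delta_n$, which says nothing about a clique. What saves the argument is that this coordinate forces genuine monotonicity of $\delta_1,\dots,\delta_n$, and monotonicity is precisely what makes $\delta(a_i,a_j)$ depend on only one endpoint, so that sweeping the remaining vertex of a triple recovers \emph{all} $\binom n2$ pairs and upgrades the path to a clique. Checking that a single color class simultaneously pins down the order type and every pairwise $\chi$-value is the one place that needs care; the rest is the routine bit-arithmetic behind the max formula.
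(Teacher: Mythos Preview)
Your argument is correct and is exactly the classical Erd\H{o}s--Hajnal construction for $k=2$. The paper itself does not give a proof of this particular theorem; it is quoted as a known result and the paper's own constructions (Section~2) are only set up for stepping from uniformity $k\ge 3$ to $k+1$. That said, your coloring is the natural specialization of the same $\delta$-machinery the paper uses: the ``max formula'' is their properties~(a) and~(b), and your first coordinate $\varepsilon$ plays the role that the function $\alpha$ on local-extremum data plays for $k\ge 3$, with the simplification that for triples there is only one pattern (up or down) to record. Your crux paragraph also identifies the right point: monotonicity of the $\delta_i$ is what upgrades the path to a clique, which is precisely why the $k=2$ step costs two extra colors rather than none.
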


Using the negative partition relation $2^{n/2} \not \rightarrow (n)^2$, this
gives $t_3(n/2) \not \rightarrow (n+1)_4^3$.
Through $k-3$ applications of Theorem \ref{stepuporiginal}, we have $t_{k}(n/2)
\not \rightarrow (2^{k-3}(n+1)-k+3)_4^k$.
Thus the four-color Ramsey number satisfies
\begin{equation}\label{fouroriginal}
t_k(c2^{-k}n) < r_k(n;4) < t_k(c'n),
\end{equation} where $c$ and $c'$ are absolute
constants. Satisfying as this result may be, one rather annoying aspect remains: the
exponential dependence on $k$ of the factor of $n$ in the lower bound.
We show here that this dependence on $k$ can be removed. In particular, for fixed $\ell \geq 4$, up to an absolute
constant factor, we know the size of the largest guaranteed monochromatic set an $\ell$-coloring
of the $k$-tuples of an $N$-set must have. We accomplish this by improving the
bound in the stepping-up lemma.

\begin{theorem}\label{newtheorem}
Suppose $k \geq 4$ and $N \not \rightarrow  (n)_{\ell}^k$. We have $2^N \not
\rightarrow  (n+3)_{\ell}^{k+1}$.
If $k$ is odd or $\ell \geq 3$, we have the better bound $2^N \not \rightarrow
(n+2)_{\ell}^{k+1}$.
\end{theorem}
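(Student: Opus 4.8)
The plan is to replace the Erd\H{o}s--Hajnal coloring underlying Theorem~\ref{stepuporiginal} with a more flexible one, so as to remove the factor of two lost there. Identify the $2^{N}$ vertices with $\{0,1\}^{N}$, totally ordered by $u\prec v$ when $u$ has a $0$ and $v$ a $1$ in the highest coordinate $\delta(u,v)$ on which they differ. We use the standard facts that $\delta(u,w)=\max\{\delta(u,v),\delta(v,w)\}$ and $\delta(u,v)\neq\delta(v,w)$ whenever $u\prec v\prec w$; consequently, for a chain $v_{1}\prec\cdots\prec v_{m}$, the gap sequence $\delta_{i}:=\delta(v_{i},v_{i+1})$ has no two consecutive terms equal, $\delta(v_{i},v_{j})=\max\{\delta_{i},\dots,\delta_{j-1}\}$, and the gap sequence of any $(k{+}1)$-element subchain is obtained by taking the maximum of $(\delta_{1},\dots,\delta_{m-1})$ over each of $k$ consecutive blocks covering a subinterval.

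To a $(k{+}1)$-tuple $v_{1}\prec\cdots\prec v_{k+1}$ with gap sequence $(\delta_{1},\dots,\delta_{k})$ attach its \emph{shape} $\sigma\in\{+,-\}^{k-1}$, recording the signs of the differences $\delta_{i+1}-\delta_{i}$. The colorings I would use generalize those of Erd\H{o}s and Hajnal as follows: fix, for every shape $\sigma$, either one of the $\ell$ colors $g(\sigma)$, or a rule assigning to the gap sequence the value of $\chi$ on a $\sigma$-determined $k$-element sub-multiset of $\{\delta_{1},\dots,\delta_{k}\}$ (applicable only when that sub-multiset is an honest $k$-set); for the two constant shapes one always uses the latter with the full gap sequence, exactly as Erd\H{o}s and Hajnal do, but for non-constant shapes one now allows the $\chi$-option as well, not only a fixed color. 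This family of colorings is the promised extension of the Erd\H{o}s--Hajnal framework. The proof then has two parts: finding conditions on these shape-wise choices that guarantee no large monochromatic set, and exhibiting choices meeting them---and it is in the second part that $k\ge4$ is essential.

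The engine is a structural lemma: for a suitable choice, if $S=\{v_{1}\prec\cdots\prec v_{m}\}$ is monochromatic, then the gap values of $S$ contain a $\chi$-monochromatic set of size at least $m-1-c_{0}$, with $c_{0}=3$ in general and $c_{0}=2$ when $k$ is odd or $\ell\ge3$. Such a lemma is plausible because every length-$(k{-}1)$ window of the shape of $S$ is the shape of a subchain of $S$, so two overlapping windows constrain one another, and because the $\chi$-selections attached to different shapes must be mutually consistent, since the same $k$-set of gap values is produced by subchains of many shapes. Concretely, arranging that consecutive single-descent shapes $+^{i}-^{k-1-i}$ never receive the same color, that the analogous condition holds for single-ascent shapes, and that the $\chi$-selection on each non-constant shape extracts an ``increasing backbone'' of gaps consistent with the constant-shape rule---all possible precisely because for $k\ge4$ there are at least two single-descent and two single-ascent shapes available---will force the gap sequence of $S$ to be monotone outside a bounded prefix and suffix, whose combined length is then reduced to $c_{0}$ by a finite case analysis. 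I expect that case analysis, which asks how short non-monotone blocks of length $k{-}1$ can border one another while all carrying one color, to be the real obstacle; it is also where the parity of $k$ and the availability of a third color bring $3$ down to $2$.

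Given the lemma, the theorem follows at once: a monochromatic set $S$ of size $m$ produces, via the lemma, a $\chi$-monochromatic set $D$ with $|D|\ge m-1-c_{0}$, while $N\not\rightarrow(n)^{k}_{\ell}$ forbids $\chi$-monochromatic sets of size $n$, so $|D|\le n-1$ and hence $m\le n+c_{0}$, giving $m\le n+3$ in general and $m\le n+2$ when $k$ is odd or $\ell\ge3$. (That $D$ can be extracted at all rests on the block-maximum description: once the gap sequence of $S$ is trimmed to an increasing run $\delta_{a}<\cdots<\delta_{b}$, any $k$-subset $\{\delta_{i_{1}},\dots,\delta_{i_{k}}\}$ of $D:=\{\delta_{a},\dots,\delta_{b}\}$ is the gap sequence of the subchain $v_{i_{1}}\prec v_{i_{1}+1}\prec v_{i_{2}+1}\prec\cdots\prec v_{i_{k}+1}$, whose shape is constant, so it is colored $\chi(\{\delta_{i_{1}},\dots,\delta_{i_{k}}\})$; monochromaticity of $S$ then makes $\chi$ constant on $\binom{D}{k}$.) A handful of degenerate cases---very short chains, very small $n$, and monochromatic sets whose color is never output by the constant-shape rule---are trivial or follow from minor variants of the same argument.
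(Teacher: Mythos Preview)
Your framework---coloring $(k{+}1)$-tuples by $\phi$ on monotone gap sequences and by some shape-dependent rule otherwise---is exactly the paper's, and your condition that consecutive single-peak shapes $+^{i}{-}^{k-1-i}$ (and dually single-valley shapes) receive distinct colors is precisely what the paper's coloring $\alpha_1(S,i)=i_1+i\bmod 2$ achieves, where $i_1$ is the position of the \emph{first} local extremum and $i$ its type. Two points, however, separate your outline from a proof.

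First, there is an off-by-one slip that costs you the theorem. From $|D|\ge m-1-c_0$ and $|D|\le n-1$ you deduce $m\le n+c_0$; with your stated $c_0=3$ this gives only $m\le n+3$, hence $2^N\not\to(n+4)_\ell^{k+1}$, not the claimed $2^N\not\to(n+3)_\ell^{k+1}$. The correct constants are $c_0=2$ in general and $c_0=1$ when $k$ is odd or $\ell\ge3$: the worst surviving pattern has $e_1=2$ and at most one further extremum at $e_2=m-2$, leaving $\delta_2,\dots,\delta_{m-2}$ monotone of length $m-3$; for odd $k$ (or with a third color) even this tail extremum is killed, because the tuple $(\epsilon_{m-k},\dots,\epsilon_m)$ then gets a different color from $(\epsilon_1,\dots,\epsilon_{k+1})$.

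Second, your treatment of shapes with several extrema---the ``$\chi$-selection extracting an increasing backbone''---is both underspecified (what happens when the selected sub-multiset is not a $k$-set?) and unnecessary. The paper assigns a \emph{fixed} color to every non-monotone shape, determined solely by the location and type of the first extremum; for the sharper bound it replaces parity by a proper coloring $\chi$ of an auxiliary graph $G_k$ on $\{2,\dots,k-1\}\times\{0,1\}$ whose edges encode exactly the pairs of (position,\,type) that must disagree. This lets the paper bypass your structural lemma entirely: it assumes a monochromatic set of size $n+3$ (resp.\ $n+2$), uses the no-long-monotone-run constraint to pin down $e_1$ and $e_2$, and in a short case analysis exhibits two $(k{+}1)$-subtuples whose first extrema differ in position by one (same type) or sit at positions $2$ and $k-1$ (opposite type), hence receive different colors. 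Your route would also work once the constants are corrected and the multi-extremum rule is replaced by the simpler fixed-color assignment, but the paper's direct contradiction is shorter.
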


This theorem implies that for all $k$ and $n \geq 3k$ the four-color Ramsey number satisfies
\begin{equation} \label{4colornewlowerbound}
r_k(n;4) > t_k(cn),
\end{equation}
where $c>0$ is an absolute constant. This bound improves on the lower bound in (\ref{fouroriginal}) and is tight apart from the absolute constant factor $c$. The lower bound in (\ref{fouroriginal}) for $k = 4$ shows that $N \not \rightarrow (n/3)_4^4$, where $N = t_4(cn)$
and $c>0$ is an absolute constant. After $k-4$ iterations of Theorem \ref{newtheorem}, we get $t_{k-4}(N) \not \rightarrow (n/3+ 2(k-4))_4^k$. Substituting in $n \geq 3k$ and $N=t_4(cn)$, we obtain (\ref{4colornewlowerbound}). The lower bound (\ref{4colornewlowerbound}) also holds for any number $\ell \geq 4$ of colors instead of $4$ as trivially $r_k(n;\ell) \geq r_k(n;\ell')$ if $\ell \geq \ell'$, and the upper bound in
(\ref{fouroriginal}) holds with the same
proof for any fixed number $\ell \geq 4$ of colors instead of $4$.
Theorem \ref{newtheorem} also allows us to improve the lower bound for two
colors for $n \geq 3k$ to $$r_k(n) \geq t_{k-1}(cn^2),$$ where $c>0$ is an absolute constant.

The off-diagonal partition relation $N \rightarrow (n_i)_{i<\ell}^k$ means that
whenever the $k$-tuples of an $N$-element set are $\ell$-colored with
colors $0,1,\ldots,\ell-1$, there is a color $i$ and a monochromatic set in color $i$ of size $n_i$.
The off-diagonal version of the stepping-up lemma of Erd\H{o}s and Hajnal asserts that if $k \geq 3$ and
$N \not \rightarrow (n_i)_{i<\ell}^k$, then $2^N \not \rightarrow (2n_i+k-4)_{i<\ell}^{k+1}$.
Our main result is the following extension of Theorem \ref{newtheorem}.

\begin{theorem}\label{newtheorem1}
Suppose $k \geq 4$ and $N \not \rightarrow  (n_i)_{i<\ell}^k$.
\begin{enumerate}
\item If $\ell \geq 2$, letting $n_i'=n_i+3$ for $i=0,1$ and $n_i'=n_i+1$ for
$2 \leq i < \ell$, then
$2^N \not \rightarrow  (n_i')_{i<\ell}^{k+1}$.
\item If $\ell \geq 2$ and $k$ is odd, letting $n_i'=n_i+2$ for $i=0,1$ and
$n_i'=n_i+1$ for $2 \leq i < \ell$, then
$2^N \not \rightarrow  (n_i')_{i<\ell}^{k+1}$.
\item If $\ell \geq 3$, letting $n_i'=n_i+2$ for $i=0,1,2$, and $n_i'=n_i+1$
for $3 \leq i < \ell$, then
$2^N \not \rightarrow  (n_i')_{i<\ell}^{k+1}$.
\end{enumerate}
\end{theorem}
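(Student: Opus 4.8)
The plan is to set up an explicit coloring on the ground set $\{0,1\}^N$, identified with integers in $[0, 2^N)$, using the colors of the $k$-tuples of $[N]$. Given two distinct elements $x \ne y$ of $\{0,1\}^N$, write $\delta(x,y)$ for the largest coordinate on which they differ; this induces a ``tournament-like'' structure where the relevant combinatorial data of a set $\{x_1 < \cdots < x_m\}$ (say with $x_1 < \cdots < x_m$ in the usual order) is recorded by the sequence $\delta_i = \delta(x_i, x_{i+1})$. The first step is to recall the standard dichotomy: a $\delta_i$ is either a local maximum, local minimum, or monotone point within the sequence $\delta_1, \dots, \delta_{m-1}$. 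For a $(k{+}1)$-tuple $x_1 < \cdots < x_{k+1}$, we read off $\delta_1, \dots, \delta_k$ and define the color as follows. If the $\delta_i$ are such that there are at least $k-1$ ``monotone'' indices — equivalently the sequence of $\delta_i$ has at most one ``peak'' in a suitable sense — then the $\delta_i$ are (weakly) monotone except possibly at one spot, and we can extract a $k$-subset of distinct values $\delta_{j_1} > \cdots$ or an increasing run, to which we apply the given coloring $\chi$ on $k$-tuples of $[N]$ witnessing $N \not\rightarrow (n_i)_{i<\ell}^k$. In the remaining cases (two or more local extrema among $\delta_1,\dots,\delta_k$) we assign a color from $\{0,1\}$ according to the parity/type of the extremum pattern — this is where the extension of the Erd\H{o}s--Hajnal framework enters, and where the three cases of the theorem (general $\ell\ge 2$; $k$ odd; $\ell \ge 3$) are handled by slightly different bookkeeping on which extra color absorbs which configuration.

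The core of the argument is then the following: suppose $H \subseteq \{0,1\}^N$ is monochromatic in color $i$ under this coloring, with $|H| = n_i'$. Order $H$ as $x_1 < \cdots < x_{n_i'}$ and consider the associated sequence $\delta_1, \dots, \delta_{n_i' - 1}$. The second step is a structural claim: because $H$ is monochromatic, the ``extremum-type'' colors cannot appear on any $(k{+}1)$-subset of $H$ except possibly in a controlled way, which forces the sequence $\delta_1, \dots, \delta_{n_i'-1}$ to be \emph{unimodal} (increasing then decreasing), or even monotone, after discarding a bounded number — here $2$ or $3$ — of boundary elements of $H$. Concretely, if the $\delta$-sequence had two ``internal'' local maxima or a local max followed by a local min etc., we could find a $(k{+}1)$-subset realizing one of the extremum colors together with a subset realizing a $\chi$-color, contradicting monochromaticity (for $k \ge 4$ there is enough room: a $(k{+}1)$-tuple has $k \ge 4$ gaps, enough to host both an extremum witness and force the clash). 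This is the step I expect to be the main obstacle: making the case analysis on extremum patterns airtight while keeping the number of discarded elements down to exactly $3$ (resp. $2$), since that constant is precisely what the theorem is optimizing, and the parity of $k$ genuinely matters in how the alternation of peak-types interacts with the two available ``structural'' colors.

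Once we know that after deleting at most $3$ (resp. $2$, resp. $2$) elements the remaining set $H' \subseteq H$ has a monotone $\delta$-sequence, the final step is routine. A monotone $\delta$-sequence $\delta_1 < \cdots < \delta_{|H'|-1}$ (or the reverse) means the values $\delta_1, \dots, \delta_{|H'|-1}$ are $|H'|-1$ distinct elements of $[N]$, and the color our construction assigns to any $(k{+}1)$-subset of $H'$ equals $\chi$ applied to a $k$-subset of these $\delta$-values determined purely by the positions; in particular the set $D = \{\delta_1, \dots, \delta_{|H'|-1}\} \subseteq [N]$ is monochromatic under $\chi$ in color $i$. Since $\chi$ witnesses $N \not\rightarrow (n_i)_{i<\ell}^k$, we must have $|D| \le n_i - 1$, hence $|H'| \le n_i$, hence $|H| \le n_i + 3$ (resp. $n_i + 2$). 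Contradicting $|H| = n_i'$ as defined in each of the three cases completes the proof; Theorem \ref{newtheorem} then follows by taking all $n_i$ equal. In writing this up I would present the coloring construction first as a standalone definition (the ``general class of colorings'' promised in the abstract), prove the unimodality lemma as a separate lemma, and only then assemble the three cases, since the $k$-odd and $\ell \ge 3$ improvements differ from the base case only in the assignment of the two structural colors and in one line of the final count.
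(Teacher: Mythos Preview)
Your setup with the $\delta$-function and the monotone/non-monotone dichotomy is the right framework, but the heart of the argument is missing at exactly the point that matters. The classical Erd\H{o}s--Hajnal step-up colors a non-monotone $(k{+}1)$-tuple by the \emph{type} of its first extremum (min vs.\ max); this is what forces unimodality of the $\delta$-sequence of a monochromatic set and yields only the $2n+k-4$ bound. To reach $n+3$ the paper colors by the \emph{position} of the first extremum as well, taking $\alpha_1(S,i)=i_1+i \pmod 2$ where $i_1$ is the location of the first extremum and $i \in \{0,1\}$ its type; parts 2 and 3 replace this by $\alpha_\chi(S,i)=\chi(i_1,i)$ for a proper coloring $\chi$ of an auxiliary graph $G_k$ on $\{2,\dots,k-1\}\times\{0,1\}$ (bipartite exactly when $k$ is odd, always $3$-colorable). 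Your phrase ``parity/type of the extremum pattern'' gestures toward this but never commits to a rule, and the entire analysis hinges on which rule you pick. Your alternative suggestion to use $\phi$ whenever there is ``at most one peak'' is not what the paper does and would create its own problems, since a non-monotone $k$-string of $\delta$'s does not canonically determine a $k$-subset of $[N]$.

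More seriously, your key structural lemma --- that after deleting two or three boundary elements the $\delta$-sequence of a monochromatic set becomes monotone --- is not how the argument runs, and your sketched justification fails: you say two internal extrema would produce both an ``extremum-colored'' tuple and a ``$\chi$-colored'' tuple, but the extremum colors are $0$ and $1$, which are also $\phi$-colors, so there is no automatic clash. The paper instead argues by direct contradiction on the location $e_1$ of the first extremum of the full $\delta$-sequence: in each of the cases $e_1>2$, or $e_1=2$ with $3$ also an extremum, or $e_1=2$ with second extremum $e_2$ not too far right, one exhibits two specific $(k{+}1)$-tuples from the alleged monochromatic set whose first-extremum positions inside the window differ by exactly one while the type is the same (or, in one subcase, the position is the same and the type flips), so $\alpha_1$ (resp.\ $\alpha_\chi$) assigns them different colors. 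No global ``monotone after deletion'' statement is proved or needed. (As a side note, your final count is also off by one: $|H'|\le n_i$ together with ``deleted at most $3$'' gives $|H|\le n_i+3$, which does not contradict $|H|=n_i'=n_i+3$.)
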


Off-diagonal hypergraph Ramsey numbers have also received much interest.
The Ramsey-number $r_k(s,n)$ is the minimum $N$ such that every red-blue
coloring of the $k$-tuples of an $N$-set contains a red $s$-set or
a blue $n$-set. For $k=2$, after several successive
improvements, it is known (see \cite{AKS80}, \cite{BK09}, \cite{K95},
\cite{S77}) that there are constants $c_1, c_2$ such that for fixed $s \geq 3$,
\begin{equation}
\label{eq2} c_1\frac{n^{(s+1)/2}}{\log^{t} n} \leq
r_2(s,n) \leq c_2\frac{n^{s-1}}{\log^{s-2} n},
\end{equation}
where $t=(s^2-s-4)/(2s-4)$.

Erd\H{o}s and Hajnal \cite{EH72} gave a simple construction showing that
$r_3(4,n) \geq 2^{cn}$. They further conjectured that
$\lim_{n \to \infty} \frac{\log r_3(4,n)}{n}=\infty$. The authors \cite{CFS08}
recently settled this conjecture. They also improved on the lower and upper bounds for $r_3(s,n)$.
For $s=4$, they show that there are positive constants $c_1$ and $c_2$ such that
$$n^{c_1n} \leq r_3(4,n) \leq n^{c_2n^2}.$$
Using the off-diagonal stepping-up lemma of Erd\H{o}s and Hajnal, it follows from the above result that for $s \geq
2^{k-1}-k+3$, $$r_k(s,n) \geq t_{k-1}(c2^{-k}n\log n).$$ In the other direction, it follows from the results in \cite{CFS08} that $r_k(s,n) \leq t_{k-1}(c'n^{s-k+1}\log n)$ for $s \geq k+1$ and $n$ sufficiently large. This leads to the
following interesting question: what is the minimum $s=s(k)$ for which $r_k(s,n)$ grows
at least as a tower function of height $k-1$ in $n$? As in the cases $k=2,3$, it is natural to conjecture that $s(k)=k+1$. However,
the original stepping-up lemma only gives $s(k) \leq 2^{k-1}-k+3$.
 Our improved stepping-up lemma gives the linear upper bound $s(k) \leq \lceil
\frac{5}{2}k \rceil -3$ for $k \geq 4$. Indeed, starting from $s(4) \leq 7$, we have $s(k+1) \leq s(k)+3$ from the first part of Theorem \ref{newtheorem1}, and we have the better bound $s(k+1) \leq s(k)+2$ for $k$ odd from the second part of Theorem \ref{newtheorem1}. This is a step toward the conjectured bound $s(k)=k+1$.

Partition relations with infinite cardinals have an important role in modern set
theory and there is an analogous stepping-up lemma for partition relations
with infinite cardinals. The still open problem of improving the bound given by
the stepping-up lemma for infinite cardinals was raised by Erd\H{o}s and Hajnal
(see \cite{EH71}). However, we do not investigate this problem here.

\section{Step-Up Colorings}

Here we construct a large family of colorings from which we derive
negative partition relations in the next section. The coloring given by
Erd\H{o}s and Hajnal to prove the stepping-up lemma is a particularly simple coloring in this
family.

Suppose $\phi:[N]^k \rightarrow \{0,\ldots,\ell-1\}$ is an $\ell$-coloring of
the $k$-tuples of
an $N$-set such that for $0 \leq i \leq \ell-1$, there is no monochromatic
$n_i$-set in color $i$.
Let $$T=\{(\gamma_1,\ldots,\gamma_N):\gamma_i=0 \mbox{ or } 1\}.$$

Let $\mathcal{P}$ denote the family of nonempty subsets of $\{2,\ldots,k-1\}$
and $\alpha$ a function $\alpha: \mathcal{P} \times \{0,1\} \rightarrow \{0,\ldots,\ell-1\}$.
The main goal of this section is to define a coloring $C=C_{\phi,\alpha}:T
\rightarrow \{0,\ldots,\ell-1\}$. We show in the next section that, for certain choices of $\alpha$,
if $\phi$ has no large monochromatic set, then $C$ also has no large monochromatic set.

If $\epsilon = (\gamma_1, \cdots, \gamma_N)$, $\epsilon' =
(\gamma'_1, \cdots, \gamma'_N)$ and $\epsilon \neq \epsilon'$,
define
\[\delta(\epsilon, \epsilon') = \max\{i : \gamma_i \neq
\gamma'_i\},\] that is, $\delta(\epsilon, \epsilon')$ is the
largest coordinate at which they differ. Given this, we can define
an ordering on $T$, saying that
\[\epsilon < \epsilon' \mbox{ if } \gamma_i = 0, \gamma'_i = 1,\]
\[\epsilon' < \epsilon \mbox{ if } \gamma_i = 1, \gamma'_i = 0,\]
where $i=\delta(\epsilon,\epsilon')$. Equivalently, associate to any $\epsilon$
the number $b(\epsilon)
= \sum_{i=1}^N \gamma_i 2^{i-1}$. The ordering then says simply
that $\epsilon < \epsilon'$ if and only if $b(\epsilon) < b(\epsilon')$.

We will further need the following two properties of the
function $\delta$ which one can easily prove.

\vspace{0.2cm}

(a) If $\epsilon_1 < \epsilon_2 < \epsilon_3$, then
$\delta(\epsilon_1, \epsilon_2) \neq \delta(\epsilon_2,
\epsilon_3)$ and

(b) if $\epsilon_1 < \epsilon_2 < \cdots < \epsilon_p$, then
$\delta (\epsilon_1, \epsilon_p) = \max_{1 \leq i \leq p-1}
\delta(\epsilon_i, \epsilon_{i+1})$.

\vspace{0.2cm}

In particular, these properties imply that there is a unique index $i$ which
achieves the maximum of $\delta(\epsilon_i, \epsilon_{i+1})$. Indeed, suppose that there are
indices $i<i'$ such that
$$\ell =\delta(\epsilon_i, \epsilon_{i+1})=\delta(\epsilon_{i'},
\epsilon_{i'+1})=\max_{1 \leq j \leq
p-1} \delta(\epsilon_j, \epsilon_{j+1}).$$
Then, by property (b) we also have that
$\ell=\delta(\epsilon_i, \epsilon_{i'})=\delta(\epsilon_{i'},
\epsilon_{i'+1})$. This contradicts
property (a) since $\epsilon_{i}<\epsilon_{i'}<\epsilon_{i'+1}$.

We are now ready to color the complete $(k+1)$-uniform hypergraph
on the set $T$. If $\epsilon_1 < \ldots < \epsilon_{k+1}$,  for $1 \leq i \leq
k$, let
$\delta_i = \delta(\epsilon_i, \epsilon_{i+1})$. Notice that $\delta_i$ is
different from $\delta_{i+1}$ by property (a).
If $\delta_1,\ldots,\delta_k$ form a monotone sequence (increasing or
decreasing), then let
$C(\{\epsilon_1,\ldots,\epsilon_{k+1}\})=\phi(\{\delta_1,\ldots,\delta_k\})$.
That is, we color this $(k+1)$-tuple of $\epsilon$'s by the
color of the $k$-tuple of the $\delta$'s. So suppose
$\delta_1,\ldots,\delta_k$ is not monotone.

For $2 \leq i \leq k-1$, we say that $i$ is a {\it local maximum} if
$\delta_{i-1} < \delta_i > \delta_{i+1}$,
and $i$ is a {\it local minimum} if $\delta_{i-1} > \delta_i < \delta_{i+1}$.
We say that $i$ is a local {\it extremum} if it is a local minimum or a local
maximum. If $\delta_1,\ldots,\delta_k$ is not monotone, then, by property (a), this sequence has a local extremum. Let $S=\{i_1,\ldots,i_d\}_<$ denote the
local extrema labeled in increasing order. The number $i_j$ is the {\it location} of the $j$th local extremum. Note that the {\it type of local
extremum} (maximum or minimum)
of $i_j$ and $i_{j+1}$ is different for $1 \leq j \leq d-1$. That is, the type
of local extremum alternates. We now can define the desired coloring:
\begin{equation}
\label{eqtwo} C(\{\epsilon_1,\ldots,\epsilon_{k+1}\})=\left\{\begin{array}{ll}
{\alpha(S,0)}&\mbox{ if $i_1$ is a local minimum}\\
{\alpha(S,1)}&\mbox{ if $i_1$ is a local maximum.}
\end{array}\right.
\end{equation}

Note that the domain of $\alpha$ has size $2^{k-1}-2$, so there are
$\ell^{2^{k-1}-2}$ different choices for $\alpha$.
That is quite a lot of possible $\alpha$ to choose from! However, some choices
are clearly more useful then others. For example,
they may give better negative partition relations, or may be simpler to define
or analyze. The coloring used by Erd\H{o}s and Hajnal in their proof of the stepping-up lemma is given by taking $\alpha$ to be the projection map onto the second coordinate. That is, they take $\alpha(S,i)=i$ for all $S \in \mathcal{P}$ and $i \in \{0,1\}$.


We next define two more particular functions $\alpha$ which we use to establish negative
partition relations. In the first coloring, the color of a $(k+1)$-tuple
is determined by the parity of the location and the type of the first local extremum. Define the
coloring $\alpha_1:\mathcal{P} \times \{0,1\} \rightarrow \{0,1,\ldots,\ell-1\}$ as follows:
$$\alpha_1(S,i) = i_1+i \pmod 2,$$
where $i_1$ is the least element of $S$. Note that $\alpha_1$ takes only the values $0$ and $1$ in its image, but the size of its range is $\ell$. If $(\delta_1,\ldots,\delta_k)$ is not monotone, the coloring $C$ of $\{\epsilon_1,\ldots,\epsilon_{k+1}\}$
we get from $\alpha_1$ is given by

\begin{equation}
\label{eqtwo1} C(\{\epsilon_1,\ldots,\epsilon_{k+1}\})=\left\{\begin{array}{ll}
{0}&\mbox{ if $i_1$ is even and a local min. or is odd and a local max.}\\
{1}&\mbox{ if $i_1$ is even and a local max. or is odd and a local min.}
\end{array}\right.
\end{equation}

There are many more colorings similar to $\alpha_1$ which we can define that are useful in establishing negative partition
relations. To do so, we will consider proper colorings of a graph $G_k$, which we define next. Let $G_k$ be the graph with
vertex set
$\{2,\ldots,k-1\} \times \{0,1\}$, where $(2,0)$ is adjacent to $(2,1)$,
$(j,i)$ is adjacent to $(j+1,i)$ for $2 \leq j \leq k-2$, $(2,0)$ is adjacent
to $(k-1,1)$, and $(2,1)$ is adjacent to $(k-1,0)$. Graph $G_k$ is
bipartite if and only if $k$ is odd, and is always three-colorable. If $k$ is
odd, the coloring $\chi(j,i)=j+i \pmod 2$ is a proper coloring, i.e.,
adjacent vertices get different colors. Let $\chi$ be a proper coloring of the
vertices of $G_k$ with colors $0,1,\ldots,\ell-1$. Define the coloring $\alpha_{\chi}:\mathcal{P} \times
\{0,1\} \rightarrow \{0,1,\ldots,\ell-1\}$ as follows:
$$\alpha_{\chi}(S,i) = \chi(i_1,i),$$
where $i_1$ is the smallest element of $S$. Note that if $k$ is odd, $\alpha_1$ is of the form $\alpha_{\chi}$, with $\chi(j,i)=j+i
\pmod 2$.

In the next section, we show that coloring $C$ with $\alpha=\alpha_1$ demonstrates the first part
of Theorem \ref{newtheorem1}. We then use coloring $C$ with $\alpha$ of the form $\alpha_{\chi}$ to establish the second and third parts of Theorem
\ref{newtheorem1}.

\section{Proof of Theorem \ref{newtheorem1}}

Here we prove Theorem \ref{newtheorem1}. We first prove part 1 of the theorem,
which states that if
$k \geq 4$, $\ell \geq 2$, and $N \not \rightarrow  (n_i)_{i<\ell}^k$, letting
$n_i'=n_i+3$ for $i=0,1$ and $n_i'=n_i+1$ for $2 \leq i < \ell$, then
$2^N \not \rightarrow  (n_i')_{i<\ell}^{k+1}$. We then prove parts 2 and 3 of
Theorem \ref{newtheorem1}.

\vspace{0.1cm}
\noindent
{\bf Proof of Theorem \ref{newtheorem1}, Part 1:}
Suppose $\phi:[N]^k \rightarrow \{0,\ldots,\ell-1\}$ is an $\ell$-coloring of
the $k$-tuples of
an $N$-set such that for $0 \leq i \leq \ell-1$, there is no monochromatic
$n_i$-set in color $i$.
Let $$T=\{(\gamma_1,\ldots,\gamma_N):\gamma_i=0 \mbox{ or } 1\}.$$ We show that
the coloring $C:T \rightarrow \{0,\ldots,\ell-1\}$
defined as $C=C_{\phi,\alpha_1}$ from the previous section demonstrates the
negative partition relation $2^N \not \rightarrow  (n_i')_{i<\ell}^{k+1}$.

We use the following simple observation several times which holds since we color each $(k+1)$-tuple of $\epsilon$'s whose $\delta$'s are
monotone by the color of the $k$-tuple of the $\delta$'s. If we have a monochromatic clique with $n$ vertices $\{\epsilon_1,\ldots,\epsilon_n\}_{<}$ labeled in increasing order whose $\delta$'s are monotone, then in coloring $\phi$ the $\delta$'s are the vertices of a monochromatic clique of size $n-1$ in the same color.

Suppose for contradiction we have a monochromatic clique with vertices
$\{\epsilon_1,\ldots,\epsilon_{n'_{j}}\}_{<}$ of size $n'_j$ in color $j$
labeled in increasing
order. From the above observation, there can be no monotone consecutive sequence of $\delta$'s of length $n_j$. In
particular, if $j>1$, since the only $(k+1)$-tuples with color $j$ are those whose string of $\delta$'s
are monotone, there is no monochromatic clique of size $n_j+1=n_j'$ in color $j$. So we may suppose $j=0$ or $1$. By
symmetry, we may suppose $j=0$. So $n'_{0}=n_0+3$. As the sequence $\delta_1,\ldots,\delta_{n_0}$ can not be monotone, there is a first local extremum $e_1 \leq n_0-1$.

{\bf Case 1:} The first local extremum $e_1$ satisfies $e_1>2$. Already in this case, we use the assumption $k \geq 4$.
We claim that, if $e_1 <
k$, then the $(k+1)$-tuples $(\epsilon_1,\ldots,\epsilon_{k+1})$ and $(\epsilon_2,\ldots,\epsilon_{k+2})$ have different colors.
On the other hand, if
$e_1 \geq k$, then the $(k+1)$-tuples $(\epsilon_{e_1-k+2},\ldots,\epsilon_{e_1+2})$ and
$(\epsilon_{e_1-k+3},\ldots,\epsilon_{e_1+3})$ have different colors. Indeed, in both cases, the first local extremum for the pair
of $(k+1)$-tuples are of the same type (minimum or maximum) but their locations differ by one and hence have different parity,
which by (\ref{eqtwo1}) implies that these $(k+1)$-tuples have different colors.

{\bf Case 2:} The first local extremum is $e_1=2$.

{\bf Case 2(a):} $3$ is a local extremum. Recall that consecutive extrema have different type and thus types of $2$ and $3$
are distinct. This implies that the $(k+1)$-tuples $(\epsilon_1,\ldots,\epsilon_{k+1})$ and $(\epsilon_2,\ldots,\epsilon_{k+2})$
have different colors. To see this, note that for each of these $(k+1)$-tuples, the first local extremum is the second $\delta$, but
these extrema are of different type, and hence by (\ref{eqtwo1}) these $(k+1)$-tuples have different colors.

{\bf Case 2(b):} $3$ is not a local extremum. As the sequence
$\delta_2,\delta_3,\ldots,\delta_{n_0+1}$ of length $n_0$ cannot be monotone,
the sequence of
$\delta$'s has a second extremum $e_2 \leq n_0$. If $e_2 < k+1$, then the
$(k+1)$-tuples $(\epsilon_2,\ldots,\epsilon_{k+2})$ and
$(\epsilon_3,\ldots,\epsilon_{k+3})$ have different colors. If $e_2 \geq k+1$, then
the $(k+1)$-tuples
$(\epsilon_{e_2-k+2},\ldots,\epsilon_{e_2+2})$ and
$(\epsilon_{e_2-k+3},\ldots,\epsilon_{e_2+3})$ have different colors.
Indeed, in either case, the first local extremum for the pair of $(k+1)$-tuples
are of the same type but their locations differ by one and
hence have different parity, which by (\ref{eqtwo1}) implies that they have different colors. This
completes the proof of part 1 of the theorem. \qed

The last two parts of Theorem \ref{newtheorem1} follow from the discussion at
the end of the previous section together with the following lemma. The proof is
similar to the previous proof.

\begin{lemma}
Suppose $\phi:[N]^k \rightarrow \{0,\ldots,\ell-1\}$ is an $\ell$-coloring of
the $k$-tuples of
an $N$-set such that for $0 \leq j \leq \ell-1$, there is no monochromatic
$n_j$-set in color $j$. Let
$\chi:\{2,\ldots,k-1\} \times \{0,1\} \longrightarrow \{0,1,\ldots,\ell-1\}$ be
a proper coloring of the vertices of the graph $G_k$
defined at the end of the previous section. Then the coloring
$C_{\phi,\alpha_{\chi}}$ defined at the end of the previous section
has no monochromatic $(n_j+2)$-set in color $j$ if $j$ is a color used by $\chi$,
and $\chi$ has no monochromatic $(n_j+1)$-set in color $j$
if $j$ is a color not used by $\chi$.
\end{lemma}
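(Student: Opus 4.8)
The plan is to imitate the proof of Part~1 almost line by line, with a single structural change: wherever that proof concluded that two $(k+1)$-tuples receive different colors because their first local extrema are of the same type but sit at locations of different parity, I would instead argue that the two corresponding vertices of the graph $G_k$ are adjacent, so the proper coloring $\chi$ assigns them different values, and hence by $(\ref{eqtwo})$ the two $(k+1)$-tuples receive different $C$-colors. The lemma's $+2$ versus $+1$ split mirrors which $G_k$-vertex colors are actually used by $\chi$.

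The case in which $j$ is not used by $\chi$ is immediate. If $\{\epsilon_1,\ldots,\epsilon_m\}_<$ were monochromatic in color $j$ under $C=C_{\phi,\alpha_\chi}$, then no $(k+1)$-subset of it could have a non-monotone string of $\delta$'s, since by $(\ref{eqtwo})$ every such $(k+1)$-tuple gets a value in the image of $\chi$. Running this over consecutive $(k+1)$-windows forces $\delta_1,\ldots,\delta_{m-1}$ to be monotone, and the observation recorded at the start of the proof of Part~1 then exhibits a monochromatic $(m-1)$-set in color $j$ for $\phi$; hence $m\le n_j$, so there is no monochromatic $(n_j+1)$-set in color $j$.

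So suppose $j$ is used by $\chi$ and, for contradiction, that $\{\epsilon_1,\ldots,\epsilon_{n_j+2}\}_<$ is monochromatic in color $j$; let $D=(\delta_1,\ldots,\delta_{n_j+1})$ be its string of $\delta$'s. As in Part~1, the observation rules out any monotone window of $n_j$ consecutive terms of $D$, so the first local extremum $e_1$ of $D$ satisfies $e_1\le n_j-1$, and where needed the second local extremum $e_2$ satisfies $e_2\le n_j$. I would then run the same case split as in Part~1: (i) $e_1>2$, with sub-cases $e_1<k$ and $e_1\ge k$; (ii) $e_1=2$ with $3$ a local extremum; (iii) $e_1=2$ with $3$ not a local extremum, using $e_2$, with sub-cases $e_2<k+1$ and $e_2\ge k+1$. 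In each case one produces two $(k+1)$-subsets of $\{\epsilon_1,\ldots,\epsilon_{n_j+2}\}$ whose first local extrema are, respectively, of the same type with locations differing by one — so the corresponding vertices $(p,t)$, $(p\pm1,t)$ of $G_k$ are joined by an edge of the form $(j',i)\sim(j'+1,i)$, valid precisely because $k\ge4$ keeps both locations inside $\{2,\ldots,k-1\}$; or both at location $2$ with opposite types — joined by the edge $(2,0)\sim(2,1)$; or at locations $2$ and $k-1$ with opposite types — joined by one of $(2,0)\sim(k-1,1)$, $(2,1)\sim(k-1,0)$, using that consecutive extrema of $D$ alternate in type. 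Since $\chi$ is proper, each such pair of $(k+1)$-subsets gets distinct $C$-colors, contradicting monochromaticity. Parts~2 and 3 of Theorem~\ref{newtheorem1} then follow by feeding this lemma a proper $2$-coloring of $G_k$ when $k$ is odd (possible since $G_k$ is then bipartite) and a proper $3$-coloring of $G_k$ when $\ell\ge3$ (possible since $G_k$ is always $3$-colorable), the colors in the image of $\chi$ acquiring the stronger bound $n_i+2$ and the remaining colors $n_i+1$.

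The only real work, and the place I expect to spend the most care, is the bookkeeping inside the case analysis: for each case one must check that both chosen $(k+1)$-subsets genuinely lie inside the $(n_j+2)$-set, that no earlier extremum of $D$ slips into either window (so the claimed first local extrema really are first), and that the two resulting vertices of $G_k$ are genuinely adjacent. The delicate point is the ``$e_2\ge k+1$'' sub-case: here the monochromatic set has only $n_j+2$ vertices, one fewer than the $n_j+3$ available in Part~1, so Part~1's comparison of $(\epsilon_{e_2-k+2},\ldots,\epsilon_{e_2+2})$ with its shift $(\epsilon_{e_2-k+3},\ldots,\epsilon_{e_2+3})$ no longer fits in the set. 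Instead one should compare $(\epsilon_1,\ldots,\epsilon_{k+1})$, whose first local extremum is at location $2$, with $(\epsilon_{e_2-k+2},\ldots,\epsilon_{e_2+2})$, whose first local extremum is at location $k-1$ and of the opposite type (since $e_1=2$ and $e_2$ are consecutive extrema of $D$); it is exactly the two ``wrap-around'' edges of $G_k$ that force these two colors apart, and this is the mechanism that buys the improvement from $n_i+3$ down to $n_i+2$.
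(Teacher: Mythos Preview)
Your proposal is correct and follows essentially the same approach as the paper. The one minor difference is in Case 2(b): the paper first isolates the boundary case $e_2=n_j$ and handles it with the wrap-around edges $(2,0)\sim(k-1,1)$, $(2,1)\sim(k-1,0)$, then observes that for $e_2<n_j$ the shifted pair $(\epsilon_{e_2-k+2},\ldots,\epsilon_{e_2+2})$, $(\epsilon_{e_2-k+3},\ldots,\epsilon_{e_2+3})$ \emph{does} fit inside the $(n_j+2)$-set and finishes with the edge $(j',i)\sim(j'+1,i)$; you instead use the wrap-around comparison uniformly for all $e_2\ge k+1$, which is a clean and equally valid way to close the case.
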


\begin{proof}
For each color $j$ not used by $\chi$, the only $(k+1)$-tuples
$\{\epsilon_1,\ldots,\epsilon_{k+1}\}_{<}$ of color $j$ are those
whose corresponding sequence of $\delta$'s is monotonic. Hence, as we already explained in the proof of the first part of
Theorem \ref{newtheorem1}, for
each color
$j$ not used by $\chi$, there is no monochromatic clique of size
$n_j+1$ in color $j$ in coloring $C_{\phi,\alpha_{\chi}}$.

So suppose for contradiction that there is a color $j$ used by $\chi$ such that
the coloring $C_{\phi,\alpha_{\chi}}$ has a monochromatic clique
$\{\epsilon_1,\ldots,\epsilon_{n_j+2}\}_{<}$ in color $j$. As we color the
$(k+1)$-tuples of $\epsilon$'s whose $\delta$'s are
monotone by the color of the $k$-tuples of the $\delta$'s, there is no
monotone consecutive sequence of $\delta$'s of length $n_j$.
As the sequence $\delta_1,\ldots,\delta_{n_j}$ cannot be monotone, there is a
first local extremum $e_1 \leq n_j-1$.

{\bf Case 1:} The first local extremum $e_1$ satisfies $e_1>2$. If $e_1 < k$,
then the $(k+1)$-tuples $(\epsilon_1,\ldots,\epsilon_{k+1})$ and
$(\epsilon_2,\ldots,\epsilon_{k+2})$ have different colors. If $e_1 \geq k$,
then the $(k+1)$-tuples
$(\epsilon_{e_1-k+2},\ldots,\epsilon_{e_1+2})$ and
$(\epsilon_{e_1-k+3},\ldots,\epsilon_{e_1+3})$ are different
colors. Indeed, in either case, the first local extremum for the pair of $(k+1)$-tuples are of the same type but their locations
differ by one and hence, since, for all $2 \leq j \leq k - 2$ and $i = 0$ or $1$, $\chi(j, i) \neq \chi(j+1,i)$,
these $(k+1)$-tuples have different colors.

{\bf Case 2:} The first local extremum is $e_1=2$.

{\bf Case 2(a):} $3$ is a local extremum. The $(k+1)$-tuples
$(\epsilon_1,\ldots,\epsilon_{k+1})$ and
$(\epsilon_2,\ldots,\epsilon_{k+2})$ have different colors. Indeed, for each of
these $(k+1)$-tuples,
the first local extremum is the second $\delta$, but they are of different type,
and hence, since $\chi(2,0) \neq \chi(2,1)$, these $(k+1)$-tuples have different colors.

{\bf Case 2(b):} $3$ is not a local extremum. As the sequence
$\delta_2,\delta_3,\ldots,\delta_{n_j+1}$ of length $n_j$ cannot be monotone,
the sequence of $\delta$'s has a second extremum $e_2 \leq n_j$. If $e_2 = n_j$,
then the $(k+1)$-tuples $(\epsilon_1,\ldots,\epsilon_{k+1})$ and
$(\epsilon_{n_j-k+2},\ldots,\epsilon_{n_j+2})$ have different colors. To see this, note that their
only local extrema are the second $\delta$ and the $(k-1)$th $\delta$,
respectively, and are of different type. Therefore, since $\chi(2,0) \neq \chi(k-1,1)$ and $\chi(2,1) \neq
\chi(k-1,0)$, these $(k+1)$-tuples have different colors. Hence $e_2<n_j$.

If $e_2 < k+1$, then the $(k+1)$-tuples $(\epsilon_2,\ldots,\epsilon_{k+2})$
and
$(\epsilon_3,\ldots,\epsilon_{k+3})$ have different colors. If $e_2 \geq k+1$, then
the $(k+1)$-tuples
$(\epsilon_{e_2-k+2},\ldots,\epsilon_{e_2+2})$ and
$(\epsilon_{e_2-k+3},\ldots,\epsilon_{e_2+3})$ are different
colors.
Indeed, in either case, the first local extremum for the pair of $(k+1)$-tuples
are of the same type but their locations differ by one, which, as in case 1, implies that they have different colors. This completes
the proof of the lemma.
\end{proof}

\end{document}